\newtheorem{theorem}{Theorem}
\newtheorem{prop}{Proposition}
\newtheorem{lemma}{Lemma}
\newtheorem{rem}{Remark}
\newtheorem{exmp}{Example}
\begin{document}

\title{Orthogonal apartments in Hilbert Grassmannians}
\author{Mark Pankov}
\subjclass[2000]{}
\keywords{Hilbert Grassmannian, logic of Hilbert space, compatibility}
\address{Department of Mathematics and Computer Science, 
University of Warmia and Mazury,
S{\l}oneczna 54, Olsztyn, Poland}
\email{pankov@matman.uwm.edu.pl}

\maketitle
\begin{abstract}
Let $H$ be an infinite-dimensional complex Hilbert space and 
let ${\mathcal L}(H)$ be the logic formed by all closed subspaces of $H$.
For every natural $k$ we denote by ${\mathcal G}_{k}(H)$
the Grassmannian consisting of $k$-dimensional subspaces.
An orthogonal apartment of ${\mathcal G}_{k}(H)$
is the set consisting of all $k$-dimensional subspaces spanned by 
subsets of a certain orthogonal base of $H$.
Orthogonal apartments can be characterized as
maximal sets of mutually compatible elements of ${\mathcal G}_{k}(H)$.
We show that every bijective transformation $f$ of ${\mathcal G}_{k}(H)$
such that $f$ and $f^{-1}$ send orthogonal apartments to orthogonal apartments
(in other words, $f$ preserves the compatibility relation in both directions)
can be uniquely extended to an automorphism of ${\mathcal L}(H)$.
\end{abstract}

\section{Introduction}
The concept of apartment comes from the theory of Tits buildings \cite{Tits},
combinatorial constructions related to groups of various types.
A {\it building} can be defined as an abstract  simplicial complex
together with a collection of subcomplexes called {\it apartments} and
satisfying some axioms.  
One of these axioms says that all apartments are isomorphic to 
the simplicial complex associated to a certain Coxeter system.
This Coxeter system defines the building type. 
The vertex set of a building is the disjoint union of 
so-called {\it building Grassmannians} \cite{Pankov-book1,Pasini}.
The intersections of apartments with a building Grassmannian are 
called apartments of this Grassmannian.
For every building of classical type all bijective transformations of Grassmannians
sending apartments to apartments can be uniquely extended to building automorphisms 
\cite{Pankov-book1}.

Every building of type $\textsf{A}_{n-1}$ is the flag complex of 
a certain $n$-dimensional vector space $V$. 
The associated Grassmannians are ${\mathcal G}_{k}(V)$, $k\in\{1,\dots,n-1\}$
formed by $k$-dimen\-sional subspaces of $V$.
Every apartment of ${\mathcal G}_{k}(V)$ is related to a certain base of $V$
and consists of all $k$-dimensional subspaces spanned by subsets of this base.
For this case the above mentioned statement on apartments preserving transformations
can be formulated as follows: 
every bijective transformation of  ${\mathcal G}_{k}(V)$
sending apartments to apartments is induced by 
a semilinear automorphism of $V$ or a semilinear isomorphism of $V$ to 
the dual vector space $V^{*}$ and the second possibility can be realized only for $n=2k$.
A more general result related to isometric embeddings of Grassmann graphs
can be found in \cite[Chapter 5]{Pankov-book2}.
Apartments preserving transformations of Grassmannians formed 
by subspaces with infinite both dimension and codimension are described in 
\cite[Theorem 3.18]{Pankov-book1}.

It is natural to ask what is happening
if we take Grassmannians of Hilbert spaces together with  {\it orthogonal apartments},
i.e. apartments defined by orthogonal bases?

Let $H$ be an infinite-dimensional complex Hilbert space.
Denote by ${\mathcal L}(H)$ the logic formed by all closed subspaces of $H$.
This logic plays an important role in mathematical foundations of quantum theory
if $H$ is separable (see, for example, \cite{Var}), 
but we will consider an arbitrary infinite-dimensional Hilbert space. 
It is well-known that every automorphism of ${\mathcal L}(H)$ 
is induced by an unitary or conjugate-unitary operator on $H$.

Recall that two closed subspaces $X,Y\subset H$ are {\it compatible} 
if there exist closed subspaces $X',Y'$ such that $X\cap Y, X',Y'$ are mutually orthogonal and 
$$X=X'+(X\cap Y),\;\;Y=Y'+(X\cap Y).$$
Theorem 2.8 from \cite{MS} can be reformulated as follows:
if $f$ is a bijective transformation of ${\mathcal L}(H)$ preserving the compatibility relation
in both directions then there exists an unitary or conjugate-unitary operator $U$
such that for every $X\in {\mathcal L}(H)$ we have $f(X)=U(X)$ or $f(X)$
coincides with the orthogonal complement of $U(X)$
(the second possibility is related with the fact that if $X$ and $Y$ compatible
then the orthogonal complement of $X$ is compatible to $Y$).
Note that
the compatibility is a property distinguishing classical logics from quantum:
in a classical logic any two elements are compatible,
a quantum logic contains non-compatible elements.

For every natural $k$ we denote by ${\mathcal G}_{k}(H)$ 
the Grassmannian formed by $k$-dimen\-sional subspaces of $H$.
In contrast to Grassmannians of vector spaces,
where for any two elements there is an apartment containing them,
two elements of ${\mathcal G}_{k}(H)$ are contained in the same orthogonal apartment
if and only if they are compatible.
Moreover, orthogonal apartments can be characterized as 
maximal sets of mutually compatible elements.
Therefore, a bijective transformation of ${\mathcal G}_{k}(H)$
preserves the class of orthogonal apartments in both directions if and only if 
it is preserving the compatibility relation in both directions.
We show that every such  transformation 
can be uniquely extended to an automorphism of the logic ${\mathcal L}(H)$.

The proof is purely combinatorial and based on 
a modification of the idea given in \cite[Section 5.2]{Pankov-book2}.
Also, we use the following fact:
every bijective transformation of ${\mathcal G}_{k}(H)$ 
preserving the orthogonality relation in both directions can be uniquely extended to
an automorphism of ${\mathcal L}(H)$ \cite{Cyory,Semrl}. 
For $k=1$ this statement is known as Wigner's theorem 
(see, for example, \cite[Theorem 4.29]{Var}).

\section{Result}
The {\it orthogonal apartment} of ${\mathcal G}_{k}(H)$
associated to an orthogonal base of $H$ 
is the set consisting of all $k$-dimensional subspaces spanned by subsets of this base. 
Every orthogonal apartment can be obtained from the unique orthonormal
base and the other related bases are formed by scalar multiples of the vectors 
from this base.

\begin{prop}\label{prop1}
The class of orthogonal apartments coincides with 
the class of maximal sets of mutually compatible elements of ${\mathcal G}_{k}(H)$.
\end{prop}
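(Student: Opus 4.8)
The plan is to prove the two inclusions separately, working throughout with the standard reformulation of compatibility: two closed subspaces $X,Y$ are compatible if and only if their orthogonal projections $P_X,P_Y$ commute. (From the paper's definition one checks this by taking $X'=X\cap Y^{\perp}$ and $Y'=X^{\perp}\cap Y$; conversely commuting projections give the orthogonal decomposition $X=(X\cap Y)\oplus(X\cap Y^{\perp})$.) With this in hand, the easy observations come first. If $A$ is the orthogonal apartment of an orthonormal base $\{e_i\}_{i\in I}$, then for $S\subseteq I$ write $E_S$ for the span of $\{e_i:i\in S\}$; for $|S|=|T|=k$ the subspaces $E_S\cap E_T=E_{S\cap T}$, $E_{S\setminus T}$, $E_{T\setminus S}$ are mutually orthogonal and realize the compatibility of $E_S$ and $E_T$, so $A$ consists of mutually compatible elements. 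It therefore remains to prove (I) that $A$ is maximal with this property, and (II) that every maximal set of mutually compatible elements arises this way.

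For (I), suppose $Z\in\mathcal G_k(H)$ is compatible with every element of $A$, so $P_Z$ commutes with $P_{E_S}$ for all $|S|=k$. Since the coordinate projections commute, $P_{E_R\cap E_{R'}}=P_{E_R}P_{E_{R'}}$, and a product of projections commuting with $P_Z$ again commutes with $P_Z$. Because $I$ is infinite, any $R$ with $|R|=k-1$ can be written as $(R\cup\{i\})\cap(R\cup\{j\})$ with $i\neq j$ outside $R$; hence commutation with all size-$k$ coordinate projections descends to commutation with all size-$(k-1)$ ones, and by downward induction to commutation with every $P_{[e_i]}$. The latter says each $e_i$ is an eigenvector of the projection $P_Z$, i.e.\ $e_i\in Z$ or $e_i\in Z^{\perp}$; putting $T=\{i:e_i\in Z\}$ gives $E_T\subseteq Z\subseteq E_{I\setminus T}^{\perp}=\overline{E_T}$, so $Z=E_T$ with $|T|=k$ and $Z\in A$. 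Thus no element outside $A$ is compatible with all of $A$, and $A$ is maximal.

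For (II), let $M$ be a maximal set of mutually compatible elements; then $\{P_X:X\in M\}$ is a commuting family of rank-$k$ projections, and it suffices to produce an orthonormal base $\mathcal B$ of $H$ in which every $X\in M$ is spanned by base vectors: the apartment of $\mathcal B$ then contains $M$, is mutually compatible by the first paragraph, and equals $M$ by maximality. I would build $\mathcal B$ by Zorn's lemma as a maximal orthonormal set of unit vectors $u$ each compatible with all $X\in M$ --- equivalently (as $[u]$ is a line) satisfying $u\in X$ or $u\perp X$ for every $X\in M$. For such a complete base each $X$ equals the span of the base vectors it contains, which must then be exactly $k$ of them.

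The crux, and the step I expect to be the main obstacle, is showing that such a maximal orthonormal family $\mathcal B_0$ is in fact complete. Let $W=(\overline{\mathrm{span}}\,\mathcal B_0)^{\perp}$ and suppose $W\neq0$. Each $P_X$ has every vector of $\mathcal B_0$ as an eigenvector, so $P_X$ commutes with $P_W$; hence the compressions $X_W:=P_W X=X\cap W$ are mutually compatible finite-dimensional subspaces of $W$. If every $X_W=0$ then any unit vector of $W$ is orthogonal to all $X\in M$ and could be adjoined to $\mathcal B_0$, a contradiction. Otherwise pick $X_W\neq0$: the projections $\{P_{Y_W}:Y\in M\}$ restrict to commuting projections on the finite-dimensional space $X_W$, which can be simultaneously diagonalized, yielding a unit common eigenvector $u\in X_W\subseteq W$. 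Using that $P_Y$ commutes with $P_W$ and $u=P_Wu$, one checks $P_{Y_W}u=u$ forces $u\in Y$ while $P_{Y_W}u=0$ forces $P_Yu=P_WP_Yu=0$, i.e.\ $u\perp Y$; so $u$ is compatible with every $Y\in M$ and orthogonal to $\mathcal B_0$, again contradicting maximality. Hence $W=0$, $\mathcal B_0$ is the desired base, and finite-dimensionality of the elements of $M$ is exactly what makes the eigenvector extraction possible.
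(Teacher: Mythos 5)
Your proposal is correct, but it takes a genuinely different route from the paper's. The paper argues lattice-theoretically: given a mutually compatible family $\mathcal{X}$, it forms the set $\mathcal{X}'$ of minimal non-zero intersections of elements of $\mathcal{X}$ (these exist because the elements are $k$-dimensional), observes that distinct members of $\mathcal{X}'$ are orthogonal (compatible subspaces with zero intersection are orthogonal), and then simply asserts that $\mathcal{X}$ is maximal if and only if every member of $\mathcal{X}'$ is a line and the vectors on these lines form an orthogonal base of $H$ --- the verification of that equivalence is left to the reader. You instead translate compatibility into commutation of the orthogonal projections and work operator-theoretically throughout: maximality of an apartment comes from your descent argument (commutation with all rank-$k$ coordinate projections forces commutation with all rank-one ones, using that $I$ is infinite, whence each $e_i$ is an eigenvector of $P_Z$), and the reverse inclusion from a Zorn's lemma construction of a maximal orthonormal family of common ``eigenvectors,'' whose completeness you secure by simultaneously diagonalizing the commuting projections compressed to the finite-dimensional subspace $X\cap W$. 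All the steps check out: the product $P_XP_Y$ of commuting projections is indeed the projection onto $X\cap Y$, the identity $E_R=E_{R\cup\{i\}}\cap E_{R\cup\{j\}}$ legitimately drives the downward induction, and the case analysis on $W$ correctly exploits that $P_X$ commutes with $P_W$ because every vector of $\mathcal{B}_0$ is a $0$- or $1$-eigenvector of $P_X$. What each approach buys: the paper's sketch is short and stays in the order-theoretic language of the rest of the article, but it hides precisely the extension issues you confront head-on (why a minimal intersection of dimension $\ge 2$, or a family of lines that fails to span, lets one enlarge the configuration); your version supplies complete detail, makes explicit where finite-dimensionality of the elements of $\mathcal{G}_k(H)$ enters (the eigenvector extraction on $X\cap W$), and your simultaneous diagonalization is in effect the operator-theoretic shadow of the paper's minimal intersections.
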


\begin{proof}
It is easy to see that every orthogonal apartment is a maximal set of 
mutually compatible elements.
Let ${\mathcal X}$ be a subset of ${\mathcal G}_{k}(H)$,
where any two elements are compatible. 
Denote by ${\mathcal X}'$ the set consisting of all minimal non-zero intersections
of elements from ${\mathcal X}$. 
Any two distinct elements of ${\mathcal X}'$ are orthogonal. 
It is clear that ${\mathcal X}$ is a maximal set of mutually compatible elements
if and only if all elements of ${\mathcal X}'$ are $1$-dimensional subspaces
and non-zero vectors lying on them form an orthogonal base of $H$.
\end{proof}

\begin{rem}\label{rem1}{\rm
Similarly, every maximal set of mutually compatible elements
of ${\mathcal L}(H)$ is formed by all closed subspaces 
spanned by subsets of a certain orthogonal base.
}\end{rem}

By Proposition \ref{prop1},
a bijective transformation of ${\mathcal G}_{k}(H)$
preserves the class of orthogonal apartments in both directions if and only if 
it is preserving the compa\-tibility relation in both directions.

\begin{theorem}\label{theorem1}
Let $f$ be a bijective transformation of ${\mathcal G}_{k}(H)$ 
such that $f$ and $f^{-1}$ send orthogonal apartments to orthogonal apartments,
in other words, $f$ preserves the compatibility relation  in both directions.
Then $f$ can be uniquely extended to an automorphism of ${\mathcal L}(H)$.
\end{theorem}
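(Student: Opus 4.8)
The plan is to reduce Theorem~\ref{theorem1} to the orthogonality extension theorem \cite{Cyory,Semrl}: it suffices to prove that $f$, and hence $f^{-1}$ by the symmetry of the hypotheses, preserves the orthogonality relation on $\mathcal{G}_{k}(H)$ in both directions; the unique automorphism of $\mathcal{L}(H)$ extending $f$ is then furnished by that theorem. Two preliminary observations organize the argument. First, for $k=1$ two distinct lines are compatible if and only if they are orthogonal, so the statement is immediate from Wigner's theorem; thus assume $k\ge 2$. Second, if $X,Y\in\mathcal{G}_{k}(H)$ are compatible then they lie in a common orthogonal apartment associated with a base $\{e_{i}\}_{i\in I}$; identifying $X$ and $Y$ with the $k$-subsets $S,T\subseteq I$ of indices of the base vectors they contain, one has $\dim(X\cap Y)=|S\cap T|$. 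In particular, for a compatible pair $X\perp Y$ holds exactly in the extreme case $S\cap T=\varnothing$, i.e.\ exactly when $X\cap Y=0$.

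The combinatorial heart is to show that $f$ preserves, inside every orthogonal apartment, the Johnson adjacency relation $\dim(X\cap Y)=k-1$. Fix an apartment $\mathcal{A}$ with base $\{e_{i}\}_{i\in I}$ and identify $\mathcal{A}$ with $\binom{I}{k}$. Since $f$ sends apartments to apartments (Proposition~\ref{prop1}), $\mathcal{A}':=f(\mathcal{A})$ is again an apartment, say with index set $I'$, and $f|_{\mathcal{A}}\colon\binom{I}{k}\to\binom{I'}{k}$ is a bijection. As all pairs inside a single apartment are compatible, the adjacency must be extracted from how $\mathcal{A}$ meets \emph{other} apartments. I would analyze the intersections $\mathcal{A}\cap\mathcal{B}$: for the apartment $\mathcal{B}$ obtained from $\{e_{i}\}$ by a generic unitary rotation inside one coordinate plane $\langle e_{i},e_{j}\rangle$, one computes $\mathcal{A}\cap\mathcal{B}=\{E_{S}:\{i,j\}\subseteq S\ \text{or}\ \{i,j\}\cap S=\varnothing\}$, a \emph{split-free} family that encodes precisely the unordered pair $\{i,j\}$. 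The aim is to characterize such intersection families purely through the compatibility relation together with cardinality and maximality conditions. Since $f$ carries $\mathcal{A}\cap\mathcal{B}$ to $\mathcal{A}'\cap f(\mathcal{B})$, this would transport the whole family of pairs $\{i,j\}$ in an equivariant way; the stars $\{E_{S}:i\in S\}$ are then recovered from the symmetric differences of these split-free families, and from the resulting point–incidence structure one reads off the adjacency relation.

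Granting preservation of adjacency on every apartment, $f|_{\mathcal{A}}$ is an isomorphism of Johnson graphs. Here the infinite dimension of $H$ is used decisively: the degenerate case $|I|=2k$, in which a complementation automorphism $S\mapsto I\setminus S$ appears, cannot occur, so $f|_{\mathcal{A}}$ is induced by a bijection $\beta\colon I\to I'$ of the base-line indices. Such a $\beta$ preserves disjointness, whence $f$ sends orthogonal pairs of $\mathcal{A}$ to orthogonal pairs of $\mathcal{A}'$. Because every orthogonal pair of $\mathcal{G}_{k}(H)$ lies in a common apartment, $f$ preserves orthogonality; applying the same reasoning to $f^{-1}$ gives preservation in both directions, and the extension theorem \cite{Cyory,Semrl} delivers the unique automorphism of $\mathcal{L}(H)$ extending $f$.

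The main obstacle is the step isolated in the second paragraph: giving a coordinate-free, compatibility-only characterization of the single-plane-rotation intersection families $\mathcal{A}\cap\mathcal{B}$, equivalently of within-apartment adjacency, and checking that the point structure reassembled from them is rigid enough to force $f|_{\mathcal{A}}$ to respect adjacency. This is exactly where the argument must modify the vector-space technique of \cite[Section~5.2]{Pankov-book2}, replacing ``belonging to a common apartment'' by the compatibility relation. Everything else is either a direct computation in a fixed orthonormal base or an appeal to the cited Johnson-graph and orthogonality-extension results.
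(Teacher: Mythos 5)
Your overall reduction is exactly right, and you correctly identify both the target (orthogonality preservation plus the extension theorem of \cite{Cyory,Semrl}) and the key combinatorial objects (the single-plane-rotation intersections $\mathcal{A}(+i,+j)\cup\mathcal{A}(-i,-j)$, which are the paper's maximal inexact subsets). But the proposal has a genuine gap at exactly the point you yourself flag as ``the main obstacle'': the step that would force $f$ to respect within-apartment adjacency is announced, not proved. Moreover, you misdiagnose what that step requires. No ``compatibility-only'' intrinsic characterization of the intersection families is needed: since $f$ and $f^{-1}$ send orthogonal apartments to orthogonal apartments and $f$ is a bijection of points, $f$ automatically carries a subset of $\mathcal{A}$ lying in a second apartment to a subset of $f(\mathcal{A})$ lying in a second apartment, i.e.\ it transports \emph{inexact} subsets to inexact subsets and hence maximal inexact subsets to maximal inexact subsets, with no intrinsic description required. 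What does have to be proved is a classification statement: every maximal inexact subset of $\mathcal{A}$ is of the single-rotation type \eqref{eq1}. This is the paper's Lemma \ref{lemma1}, proved by intersecting, for each index $i$, the members of the given subset containing $e_{i}$ with the orthocomplements of those not containing $e_{i}$, and locating an index $i$ with $\dim S_{i}\ge 2$. Without this lemma the image of your family $\mathcal{A}\cap\mathcal{B}$ is merely \emph{some} intersection of two apartments, and the equivariant transport of the pairs $\{i,j\}$ on which your second paragraph rests never gets off the ground.

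Even granting that transport, your remaining route --- reconstructing the stars $\{E_{S}:i\in S\}$ from symmetric differences of the split-free families, deducing a Johnson-graph isomorphism, and ruling out complementation because $|I|$ is infinite --- is left entirely unexecuted, and its nontrivial part (distinguishing, from the incidence structure alone, pairs $\{i,j\}$ and $\{i,l\}$ sharing an index from disjoint pairs) is precisely the kind of thing that can fail: the paper's final remarks show that for $\dim H=2k\pm 2$ the complementary subsets do not determine this structure (e.g.\ for $n=6$, $k=2$ any two elements lie in exactly $4$ complementary subsets and any two complementary subsets meet in $3$ elements), so the claim needs an actual argument in the infinite-dimensional case. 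The paper bypasses all of this machinery with a single counting criterion (Lemma \ref{lemma2}): $X,Y\in\mathcal{A}$ are orthogonal if and only if only finitely many complementary subsets $\mathcal{C}_{ij}=\mathcal{A}(+i,-j)\cup\mathcal{A}(+j,-i)$ contain both, because a common $\mathcal{C}_{ij}$ forces either one of $e_{i},e_{j}$ into $X\setminus Y$ and the other into $Y\setminus X$ (finitely many choices) or one into $X\cap Y$ and the other outside $X+Y$ (infinitely many choices as soon as $X\cap Y\ne 0$, since $I$ is infinite). This reads off orthogonality directly from data that $f$ demonstrably preserves, with no appeal to Johnson-graph rigidity. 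To complete your proof you would need to (i) establish the classification of maximal inexact subsets, and (ii) either carry out your star-reconstruction in full or, more economically, replace it by such a finiteness criterion.
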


Two elements of ${\mathcal G}_{1}(H)$ are compatible if and only if they are orthogonal, i.e.
for $k=1$ our result coincides with the mentioned above Wigner's theorem.

\section{Proof}
We prove Theorem \ref{theorem1} for $k\ge 2$.
Let ${\mathcal A}$ be the orthogonal apartment of ${\mathcal G}_{k}(H)$ defined by 
an orthogonal base $\{e_{i}\}_{i\in I}$. 
For every $i\in I$ we denote by ${\mathcal A}(+i)$ and ${\mathcal A}(-i)$
the sets consisting of all elements of ${\mathcal A}$ which contain $e_{i}$ and 
do not contain $e_{i}$, respectively.
Also, we set 
$${\mathcal A}(+i,+j):={\mathcal A}(+i)\cap {\mathcal A}(+j),$$
$${\mathcal A}(+i,-j):={\mathcal A}(+i)\cap {\mathcal A}(-j),$$
$${\mathcal A}(-i,-j):={\mathcal A}(-i)\cap {\mathcal A}(-j)$$
for any distinct $i,j\in I$.

We say that a subset of ${\mathcal A}$ is {\it inexact}
if there is an orthogonal apartment distinct from ${\mathcal A}$ and containing this subset.

\begin{exmp}{\rm
For any distinct $i,j\in I$ the subset 
\begin{equation}\label{eq1}
{\mathcal A}(+i,+j)\cup {\mathcal A}(-i,-j)
\end{equation}
is inexact. 
In the base $\{e_{i}\}_{i\in I}$ we replace the vectors $e_{i},e_{j}$
by any other pair of orthogonal vectors belonging to 
the $2$-dimensional subspace ${\mathbb C}e_{i}+{\mathbb C}e_{j}$
and consider the associated orthogonal apartment ${\mathcal A}'$.
Then
$${\mathcal A}\cap {\mathcal A}'={\mathcal A}(+i,+j)\cup {\mathcal A}(-i,-j).$$
}\end{exmp}

\begin{lemma}\label{lemma1}
Every inexact subset of ${\mathcal A}$ 
is contained in a subset of type \eqref{eq1}, i.e.
every maximal inexact subset is of type \eqref{eq1}.
\end{lemma}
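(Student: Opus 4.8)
The plan is to analyse the intersection of $\mathcal{A}$ with a competing orthogonal apartment. Suppose $\mathcal{S}\subseteq\mathcal{A}$ is inexact, so that $\mathcal{S}\subseteq\mathcal{A}\cap\mathcal{A}'$ for some orthogonal apartment $\mathcal{A}'\ne\mathcal{A}$; let $\{e'_{l}\}_{l\in I'}$ be an orthonormal base defining $\mathcal{A}'$. Since every subset of type \eqref{eq1} is itself inexact by the Example, proving that every inexact subset is contained in one of type \eqref{eq1} immediately yields the reformulation that the maximal inexact subsets are exactly those of type \eqref{eq1}. Thus the whole statement reduces to finding distinct $i,j\in I$ with $\mathcal{A}\cap\mathcal{A}'\subseteq\mathcal{A}(+i,+j)\cup\mathcal{A}(-i,-j)$; equivalently, I must exhibit two indices that \emph{no} element of $\mathcal{A}\cap\mathcal{A}'$ separates (contains exactly one of $e_{i},e_{j}$).

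First I would locate the pair. Because $\mathcal{A}'\ne\mathcal{A}$, the two families of lines $\{\mathbb{C}e_{m}\}_{m\in I}$ and $\{\mathbb{C}e'_{l}\}_{l\in I'}$ differ; since the primed family is a maximal family of pairwise orthogonal lines, it cannot be contained in the unprimed one unless the two coincide, so some primed vector $e'_{j_{0}}$ is not proportional to any $e_{m}$. Consequently $e'_{j_{0}}$ has at least two nonzero coordinates in the base $\{e_{m}\}$, say $\langle e_{i},e'_{j_{0}}\rangle\ne0$ and $\langle e_{j},e'_{j_{0}}\rangle\ne0$ with $i\ne j$. These $i,j$ are the required indices, and I note that this choice is insensitive to any finiteness assumption on the bases.

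The main step, and the main obstacle, is to show that every $X\in\mathcal{A}\cap\mathcal{A}'$ satisfies $e_{i}\in X\Leftrightarrow e_{j}\in X$, which must use both coordinate descriptions of $X$ at once. Such an $X$ is spanned by the unprimed base vectors it contains and also by the primed base vectors it contains. If $e_{i}\in X$, then expanding $e_{i}$ in the primed base, and using that its coefficients against primed vectors outside $X$ vanish, forces $e'_{j_{0}}\in X$ since $\langle e_{i},e'_{j_{0}}\rangle\ne0$; expanding $e'_{j_{0}}$ in the unprimed base and arguing identically forces $e_{j}\in X$ since $\langle e_{j},e'_{j_{0}}\rangle\ne0$. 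By the symmetry of the roles of $i$ and $j$ the reverse implication holds as well, so $X$ contains both $e_{i},e_{j}$ or neither, i.e. $X\in\mathcal{A}(+i,+j)\cup\mathcal{A}(-i,-j)$. This gives $\mathcal{S}\subseteq\mathcal{A}\cap\mathcal{A}'\subseteq\mathcal{A}(+i,+j)\cup\mathcal{A}(-i,-j)$, which is the desired inclusion and completes the lemma.
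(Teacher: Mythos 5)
Your proof is correct, and it takes a genuinely different route from the paper's. The paper argues intrinsically from the inexact set ${\mathcal X}$ itself: for each $i\in I$ it forms the subspace $S_{i}$ by intersecting all members of ${\mathcal X}$ that contain $e_{i}$ together with the orthogonal complements of all members that do not; inexactness forces some $S_{i}$ to have dimension at least $2$ (otherwise ${\mathcal A}$ would be the unique apartment containing ${\mathcal X}$), and any base vector $e_{j}\in S_{i}$ with $j\ne i$ then gives ${\mathcal X}\subset{\mathcal A}(+i,+j)\cup{\mathcal A}(-i,-j)$ immediately from the definition of $S_{i}$. You instead work directly with the witnessing apartment ${\mathcal A}'$: maximality of an orthogonal family of lines yields a primed base vector $e'_{j_{0}}$ proportional to no $e_{m}$, hence with $\langle e_{i},e'_{j_{0}}\rangle\ne 0\ne\langle e_{j},e'_{j_{0}}\rangle$ for distinct $i,j$, and your double orthogonal-expansion step ($e_{i}\in X\Rightarrow e'_{j_{0}}\in X\Rightarrow e_{j}\in X$, plus the symmetric implication) shows that no element of ${\mathcal A}\cap{\mathcal A}'$ separates $i$ from $j$ --- each step is sound, since an element of an apartment is spanned by the base vectors it contains while the remaining base vectors lie in its orthogonal complement. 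Your version buys two things: it proves the slightly stronger, explicit statement that the \emph{entire} intersection ${\mathcal A}\cap{\mathcal A}'$ of two distinct orthogonal apartments lies in a set of type \eqref{eq1}, in harmony with the Example; and it is self-contained, whereas the paper's pivotal assertion (``if every $S_{i}$ is $1$-dimensional then ${\mathcal A}$ is the unique orthogonal apartment containing ${\mathcal X}$'') is left unproved and, when unwound, requires essentially your expansion argument. What the paper's $S_{i}$-formulation buys in exchange is that the distinguished pair $(i,j)$ is extracted from ${\mathcal X}$ alone, with no choice of a second apartment, so the combinatorial data determining maximality is read off directly from the inexact set.
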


\begin{proof}
Let ${\mathcal X}$ be an inexact subset of ${\mathcal A}$.
For every $i\in I$ we denote by $S_{i}$ the intersection of all subspaces $X$ 
which satisfy one of the following conditions:
\begin{enumerate}
\item[$\bullet$]  
$X$ is an element of ${\mathcal X}$ containing $e_{i}$,
\item[$\bullet$]  
$X$ is the orthogonal complement of an element from ${\mathcal X}$ non-containing $e_{i}$. 
\end{enumerate}
Each $S_i$ is non-zero.
If every $S_{i}$ is $1$-dimensional then 
${\mathcal A}$ is the unique orthogonal apartment containing ${\mathcal X}$
which contradicts the fact that ${\mathcal X}$ is inexact.
So, there is at least one $i\in I$ such that $\dim S_{i}\ge 2$.
We take any $j\ne i$ such that $e_{j}$ belongs to $S_{i}$. 
Then 
$${\mathcal X}\subset {\mathcal A}(+i,+j)\cup {\mathcal A}(-i,-j).$$
Indeed, if $X\in {\mathcal X}$ contains $e_{i}$ then $e_{j}\in S_{i}\subset X$
and $X$ belongs to ${\mathcal A}(+i,+j)$. 
If $X\in {\mathcal X}$ does not contain $e_{i}$ then $e_{j}\in S_{i}\subset X^{\perp}$
which means that $e_{j}$ is not contained in $X$
and $X$ belongs to ${\mathcal A}(-i,-j)$. 
\end{proof}

A subset ${\mathcal C}\subset {\mathcal A}$ is said to be {\it complementary}
if ${\mathcal A}\setminus {\mathcal C}$ is a maximal inexact subset, i.e.
$${\mathcal A}\setminus {\mathcal C}={\mathcal A}(+i,+j)\cup {\mathcal A}(-i,-j)$$
for some distinct $i,j\in I$.
Then 
$${\mathcal C}={\mathcal A}(+i,-j)\cup {\mathcal A}(+j,-i).$$
This complementary subset will be denoted by ${\mathcal C}_{ij}$.
Note that ${\mathcal C}_{ij}={\mathcal C}_{ji}$.

\begin{lemma}\label{lemma2}
Two $k$-dimensional subspaces $X,Y\in{\mathcal A}$ are orthogonal if and only if 
the number of complementary subsets of ${\mathcal A}$ 
containing both $X$ and $Y$ is finite.
\end{lemma}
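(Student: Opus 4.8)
The plan is to pass from the subspaces in $\mathcal{A}$ to the finite subsets of $I$ indexing them and then to reduce the claim to a cardinality count. Each $X\in\mathcal{A}$ is spanned by $k$ of the base vectors, so it corresponds to a $k$-element set $J_{X}\subset I$ with $X=\mathrm{span}\{e_{i}:i\in J_{X}\}$; as the base is orthogonal, $X$ and $Y$ are orthogonal precisely when $J_{X}\cap J_{Y}=\emptyset$. The first observation is elementary: $X\in\mathcal{C}_{ij}=\mathcal{A}(+i,-j)\cup\mathcal{A}(+j,-i)$ holds if and only if $J_{X}$ contains exactly one of $i,j$. Hence $\mathcal{C}_{ij}$ contains both $X$ and $Y$ if and only if each of $J_{X},J_{Y}$ meets $\{i,j\}$ in exactly one index.

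Next I would partition $I$ according to membership in $J_{X}$ and $J_{Y}$ into four classes -- the indices lying in both sets, in $J_{X}$ only, in $J_{Y}$ only, and in neither -- of respective cardinalities $a,b,c,d$. From $|J_{X}|=|J_{Y}|=k$ one gets $a+b=a+c=k$, so $b=c$ and $a,b,c\le k$; since $I$ is infinite, this forces $d$ to be infinite. A short case analysis over the possible types of $i$ and $j$ then shows that both $J_{X}$ and $J_{Y}$ meet $\{i,j\}$ in exactly one point precisely when one of $i,j$ lies in the ``both'' class and the other in the ``neither'' class, or when one lies in the ``$J_{X}$ only'' class and the other in the ``$J_{Y}$ only'' class. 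Counting the corresponding unordered pairs yields exactly $ad+bc=ad+b^{2}$ such pairs $\{i,j\}$.

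Finally I would read off the dichotomy. For $k\ge 2$ one checks that distinct pairs $\{i,j\}$ give distinct subsets $\mathcal{C}_{ij}$, so counting pairs is the same as counting complementary subsets. Since $d$ is infinite while $b^{2}\le k^{2}$ is finite, the number $ad+b^{2}$ is finite if and only if $a=0$; and $a=|J_{X}\cap J_{Y}|=0$ is exactly orthogonality of $X$ and $Y$. I expect the only delicate point to be the case analysis of the second step -- ensuring the four index-types are combined so that the two ``exactly one'' conditions hold at once -- but this is a purely finite check; the infinitude of $d$, guaranteed by $\dim H=\infty$, is what makes the single summand $ad$ decisive.
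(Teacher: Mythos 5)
Your proof is correct and follows essentially the same route as the paper: your two surviving cases in the four-class partition (one index in $J_{X}\cap J_{Y}$ and the other outside $J_{X}\cup J_{Y}$, contributing $ad$; one index in $J_{X}$ only and the other in $J_{Y}$ only, contributing $bc=b^{2}$) are precisely the paper's conditions (2) and (1), with the same dichotomy that the first type is infinite exactly when $X\cap Y\ne 0$ while the second is always finite. Your version merely makes explicit two details the paper leaves implicit, namely the exact count $ad+b^{2}$ and the injectivity of the map $\{i,j\}\mapsto{\mathcal C}_{ij}$.
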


\begin{proof}
If the complementary subset ${\mathcal C}_{ij}$ contains both $X$ and $Y$ then 
one of the following possibilities is realized:
\begin{enumerate}
\item[(1)] one of $e_{i},e_{j}$ belongs to $X\setminus Y$ and 
the other to $Y\setminus X$,
\item[(2)] one of $e_{i},e_{j}$ belongs to $X\cap Y$ and the other is not contained in $X+Y$.
\end{enumerate}
The number of complementary subsets ${\mathcal C}_{ij}$ satisfying (1) is finite. 
If $X$ and $Y$ are orthogonal then $X\cap Y=0$ and
there is no ${\mathcal C}_{ij}$ satisfying (2).
In the case when $X\cap Y\ne 0$, there are infinitely many such ${\mathcal C}_{ij}$. 
\end{proof}

Let $f$ be a bijective transformation of ${\mathcal G}_{k}(H)$
such that $f$ and $f^{-1}$ send orthogonal apartments to orthogonal apartments.
For any orthogonal $k$-dimensional subspaces $X,Y\subset H$
there is an orthogonal apartment ${\mathcal A}\subset {\mathcal G}_{k}(H)$ containing them.
It is clear that $f$ sends inexact subsets of ${\mathcal A}$ 
to inexact subsets of $f({\mathcal A})$.
Similarly, $f^{-1}$ transfers inexact subsets of $f({\mathcal A})$ to inexact subsets of ${\mathcal A}$. 
This implies that  ${\mathcal X}$ is a maximal inexact subset of ${\mathcal A}$
if and only if $f({\mathcal X})$ is a maximal inexact subset of $f({\mathcal A})$.
Therefore, a subset of ${\mathcal A}$ is complementary if and only if 
its image is a complementary subset of $f({\mathcal A})$.
Then Lemma \ref{lemma2} guarantees that  $f(X)$ and $f(Y)$ are orthogonal.
Similarly, we establish that $f^{-1}$ transfers orthogonal subspaces to orthogonal subspaces.  
So, $f$ preserves the orthogonality relation in both directions 
which means that it can be uniquely extended to an automorphism of ${\mathcal L}(H)$.

\section{Final remarks}
Consider the case when $H$ is a complex Hilbert space of finite dimension $n$.
As above, we write ${\mathcal G}_{k}(H)$ for the Grassmannian of $k$-dimensional subspaces of $H$.
Let ${\mathcal A}$ be an ortho\-go\-nal apartment of ${\mathcal G}_{k}(H)$.
In almost all cases, the dimension of the intersection of two elements from ${\mathcal A}$
can be characterized in terms of complementary subsets and there is an analogue of Theorem \ref{theorem1}
(we get a statement similar to \cite[Theorem 2.8]{MS} if $n=2k$).
For $n=2k\pm 2$ such a characterization is impossible 
and we need some additional arguments. 

For example, if $n=6$ and $k=2$ then $|{\mathcal A}|=15$ and 
the intersection of two distinct elements from ${\mathcal G}_{k}(H)$ is zero or $1$-dimensional. 
Any pair of elements from ${\mathcal A}$ is contained 
in precisely $4$ distinct complementary subsets. 
The intersection of two complementary subsets always is a $3$-element subset
and we cannot distinguish one pair of complementary subsets from others.

\end{document}